\DeclareFontFamily{OT1}{rsfs}{}
\DeclareFontShape{OT1}{rsfs}{n}{it}{<-> rsfs10}{}
\DeclareMathAlphabet{\mathscr}{OT1}{rsfs}{n}{it}
\newtheorem{theorem}{Theorem}[section]
\newtheorem{lemma}[theorem]{Lemma}
\newtheorem{conj}{Conjecture}
\newtheorem{prop}[theorem]{Proposition}
\newtheorem{corollary}[theorem]{Corollary}
\theoremstyle{defin} \newtheorem{defin}[theorem]{Definition}}
\theoremstyle{remark} \newtheorem{remark}[theorem]{Remark}
\title{Regular Functions on Formal-Analytic Arithmetic Surfaces}
\author{Samuel Goodman}
\date{May 2025}
\begin{document}
\maketitle \vspace{-2.5 em} \begin{abstract}
In this paper, we show that for a broad class of pseudoconvex formal-analytic arithmetic surfaces, those which admit a nonconstant monic such regular function, that a conjecture of Bost-Charles that the ring of regular functions has continuum cardinality is implied by a purely complex-analytic conjecture. Under the conjecture, a Fekete–Szëgo-type approximation argument produces a polynomial ``large” relative to the regular function, which in turn yields continuum many distinct regular functions. We also introduce a formula for the pushforward by a holomorphic function of the equilibrium Green's functions for our bordered Riemann surface with boundary, a formula which has constant term related to Arakelov degree. 
\end{abstract}
\maketitle
\section{Introduction} Intersection theory was one of the crucial tools enabled by the algebraic geometry revolution. It proved useful to the point where efforts were made to find a good intersection theory in settings outside the useful framework of intersection theory within algebraic geometry. One of these instances is Arakelov intersection theory, which is defined using the theory of Green's functions on compact Riemann surfaces (with boundary) to get a handle on the infinite (archimedean) places. Since the construction of an Arakelov surface can be thought of as combining the fibers over the finite primes of $\text{Spec}(\mathcal{O}_K)$ with Riemann surfaces for the infinite places (which are in some sense the ``thicker" part of the construction), Green's functions become a natural way to attach numerical invariants to the Riemann surfaces that appear at the infinite places. For the purposes of functoriality, some extra conditions on put on the Green's functions of interest, yielding an Arakelov intersection theory that is well-behaved under pushforwards and pullbacks (so that they induce maps on the Arakelov-Chow groups). \newline \newline We now explain the construction of formal-analytic arithmetic surfaces over $\mathcal{O}_K$, following section $6.1.1$ of [2]. A formal-analytic arithmetic surface $\tilde{\mathcal{V}}$ comprises of the data $(\widehat{\mathcal{V}},(V_{\sigma},O_{\sigma},\iota_{\sigma})_{\sigma: K \to \mathbb{C}})$, where \newline  \newline 1) $\widehat{\mathcal{V}}$ is a Noetherian affine formal scheme over $\mathcal{O}_K$ (this is the part of the surface lying over the finite places) such that $\pi: \tilde{\mathcal{V}}=\text{Spf}(B) \to \text{Spec}(\mathcal{O}_K)$ induces an ismorphism $\pi_{|\mathcal{V}|}: |\mathcal{V}| \to \text{Spec}(\mathcal{O}_K)$ on the reduced schemes of definition and such that $B$ is formally smooth over $\mathcal{O}_K$ with $\pi$ having $1$-dimensional fibers, \newline \newline 2) Each $(V_{\sigma},O_{\sigma},\iota_{\sigma})$ (corresponding to the embeddings $\sigma: K \to \mathbb{C}$) consists of a connected compact Riemann surface (with boundary) $V_{\sigma}$, a distinguished point $O_{\sigma} \in \mathring{V}$, and an isomorphism $\iota_{\sigma}: \widehat{\mathcal{V}}_{\sigma} \to \widehat{V}_{\sigma,O_{\sigma}}$, where $\widehat{\mathcal{V}}_{\sigma}$ is the smooth formal curve $\widehat{\mathcal{V}} \otimes_{\mathcal{O}_K,\sigma} \mathbb{C}$ induced by the topological $\mathbb{C}$-algebras $B \widehat{\otimes}_{\mathcal{O}_K,\sigma} \mathbb{C}$, while $\widehat{V}_{\sigma,O_{\sigma}}$ is the formal completion of $V_{\sigma}$ at $O_{\sigma}$. We also have a compatibility relationship which relates pairs of complex conjugate embeddings (so that there are no redundancies for the complex infinite places). \newline \newline The isomorphism $\pi_{|\mathcal{V}|}: |\mathcal{V}| \to \text{Spec}(\mathcal{O}_K)$ induces a section $P: \text{Spec}(\mathcal{O}_K) \to \widehat{\mathcal{V}}$. Letting $I$ be a maximal ideal of definition for $B$, we construct the corresponding ideal sheaf $\mathcal{I}$ and then define $N_P(\widehat{\mathcal{V}})=P^*(\widehat{\mathcal{I}/\mathcal{I}^2})$, which is the pullback of the tangent bundle $\widehat{\mathcal{I}/\mathcal{I}^2}$ over $\widehat{\mathcal{V}}$  with respect to this section. We can then construct a normal bundle $N_P(\tilde{\mathcal{V}})$ by starting with $N_P(\widehat{\mathcal{V}})$ and then equipping it with capacitary metrics $||\cdot||_{V_{\sigma},P_{\sigma}}$ along the complex lines corresponding to copies of $T_{P_{\sigma}}V_{\sigma}$. \newline \newline Letting $\widehat{\text{deg}}$ be Arakelov degree, the key invariant of interest will be $\widehat{\text{deg}}(\overline{N}_P(\tilde{\mathcal{V}}))$. We say that $\tilde{V}$ is pseudoconvex if $\widehat{\text{deg}}(\overline{N}_P(\tilde{V}))<0$. Our main goal is to prove the following claim: \begin{theorem} Let $\tilde{V}$ be a pseudoconvex formal-analytic arithmetic surface over $\text{Spec}(\mathbb{Z})$, centered at $O$. Let $\varphi$ be the compositional inverse of the gluing map $\psi$. Suppose that $\tilde{V}$ admits a nonconstant monic $f \in \mathbb{Z}[[T]]$ such that $\phi=f \circ \varphi$ is holomorphic on some neighborhood of $V$ and that Conjecture $1$ holds for $\phi$. Then there are a continuum of power series $g \in \mathbb{Z}[[T]]$ such that $g \circ \varphi$ is holomorphic on some neighborhood of $V$. \end{theorem}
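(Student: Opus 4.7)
My plan is to exploit Conjecture $1$ once, applied to the given $\phi$, to extract a single integer polynomial whose composition with $\phi$ has supremum norm strictly less than $1$ on a neighborhood of $V$, and then to manufacture continuum many distinct regular functions from it by a $\{0,1\}$-series construction.

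The first step, which is the principal obstacle, is the following Fekete-Szego-type extraction: from Conjecture $1$ one produces $P\in\mathbb{Z}[T]$, which we may further arrange (by multiplying by a suitable power of $(T-f(0))$) to vanish at the integer $f(0)$ and to satisfy $\|P\circ\phi\|_V<1$. The arithmetic input is the pseudoconvexity hypothesis $\widehat{\text{deg}}(\overline{N}_P(\tilde{V}))<0$: via the pushforward formula for equilibrium Green's functions (whose constant term encodes precisely this Arakelov degree) this translates into a capacity bound on $\phi(V)\subset\mathbb{C}$ that Conjecture $1$ is tailored to convert into the integer polynomial $P$. Verifying that the approximation supplied by the conjecture can be taken over $\mathbb{Z}$ with the required normalization, rather than merely over $\mathbb{C}$, is the crux of the whole theorem.

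Once $P$ is available, set $h:=P\circ f\in\mathbb{Z}[[T]]$; monicity of $f$ together with integrality of $P$ keeps $h$ in $\mathbb{Z}[[T]]$, and the vanishing of $P$ at $f(0)$ forces $\nu_T(h)\geq 1$. For every $\varepsilon=(\varepsilon_n)_{n\geq 1}\in\{0,1\}^{\mathbb{N}}$ I would form
\[
g_\varepsilon \;:=\; \sum_{n=1}^\infty \varepsilon_n\,h^n.
\]
Because $\nu_T(h^n)\geq n\to\infty$, this converges $T$-adically in $\mathbb{Z}[[T]]$, and because $\|P\circ\phi\|_V<1$ the analytic incarnation $g_\varepsilon\circ\varphi=\sum_n\varepsilon_n(P\circ\phi)^n$ converges uniformly on a neighborhood of $V$ to a holomorphic function. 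Hence every $g_\varepsilon$ is a regular function of the kind demanded by the theorem, giving $2^{\aleph_0}=\mathfrak{c}$ candidates.

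Finally one verifies that distinct $\varepsilon$'s produce distinct $g_\varepsilon$'s. After passing to a subsequence of powers $h^{n_k}$ if necessary, the $T$-adic valuations $\nu_T(h^{n_k})$ are strictly increasing, so the bits $\varepsilon_{n_k}$ can be read off inductively from the lowest-order nonzero Taylor coefficient of $g_\varepsilon$ not already explained by $\varepsilon_{n_1},\dots,\varepsilon_{n_{k-1}}$. This yields a continuum of pairwise distinct regular functions, completing the argument modulo the conjectural first step; everything after the extraction of $P$ is essentially formal, so the entire content of the theorem is concentrated in the interface between Conjecture $1$ and the pseudoconvexity condition on $\overline{N}_P(\tilde{V})$.
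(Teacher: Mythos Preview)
Your second step --- once a ``small'' integer-coefficient building block $h$ is in hand, forming $\{0,1\}$-series $\sum_n \varepsilon_n h^n$ and reading off the bits from $T$-adic valuations --- is exactly the paper's endgame, and your distinctness argument is fine. The gap is in your first step, and it is a genuine one.

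Conjecture~1 does not hand you a polynomial $P\in\mathbb{Z}[T]$ with $\|P\circ\phi\|_V<1$. What it produces is a monic $p\in\mathbb{R}[X]$ with $\inf_{x\in V}|p(1/\phi(x))|>1$: the variable is $1/\phi$, not $\phi$, and the bound is from below, not above. Your proposed adjustment of multiplying by powers of $T-f(0)=T$ goes in the wrong direction: it replaces $P(\phi)$ by $\phi^k P(\phi)$, which can only grow on the part of $V$ where $|\phi|>1$, so it cannot manufacture the sup bound you need. Nor does pseudoconvexity translate into a logarithmic-capacity bound on the image $\phi(V)\subset\mathbb{C}$; what it yields (Proposition~3.9) is precisely the hypothesis $\log\|\phi^{[e]}(O)\|^{\text{cap},\otimes e}_{V,O}<0$ of Conjecture~1, which concerns $1/\phi$ near the boundary.

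The paper closes this gap in two moves. First, a Fekete--Szeg\H{o}-type patching (Lemma~4.2, resting on a multinomial divisibility lemma) upgrades the real monic $p$ to a monic $p\in\mathbb{Z}[X]$ still satisfying $\inf_V|p(1/\phi)|>1$. Second --- and this is the idea your outline is missing --- one sets $q(X):=p(1/X)$ and takes the \emph{reciprocal}: since $p$ is monic of degree $m$, the polynomial $X^m p(1/X)$ has constant term $1$, hence is a unit in $\mathbb{Z}[[X]]$, so $1/q(X)=X^m\big/\big(X^m p(1/X)\big)$ lies in $X^m\mathbb{Z}[[X]]$. This is a power series, not a polynomial, but that is all one needs: $h:=(1/q)\circ f\in\mathbb{Z}[[T]]$ has $\nu_T(h)\ge m$ and $|h\circ\varphi|=1/|p(1/\phi)|<1$ on a neighborhood of $V$, after which your series construction goes through verbatim. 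The monicity of $p$ is doing essential work here --- it is exactly what makes the reciprocal integral.
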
 \noindent
Implicit in the statement of Theorem $1.1$ is the following conjecture, which is purely complex analytic: \begin{conj} Let $\phi: V^+ \to \mathbb{C}$ be holomorphic on a neighborhood of $V$ of vanishing order $e$ at $O$ and let $\sigma$ be an antiholomorphic involution of $V^+$. Suppose that $\log||\phi^{[e]}(O)||^{\text{cap}, \otimes e}_{V,O}<0$ and such that $\phi \circ \sigma=\overline{\phi}$ on $V^+$. Then there exists a monic polynomial $p(X) \in \mathbb{R}[X]$ such that $\inf_{x \in V} |p(\frac{1}{\phi(x)})|>1$. \end{conj}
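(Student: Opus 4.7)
The plan has two main steps: reduce the conjecture to a capacity statement on $W:=\phi(V)\subset\mathbb{C}$ via the pushforward of Green's functions, and then produce the desired monic polynomial by a Fekete--Szeg\H{o}-type extremal argument.

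For the first step I would apply the pushforward formula for equilibrium Green's functions introduced in this paper. Working in a local parameter $T$ at $O$ with $\phi(T)=c_eT^e+O(T^{e+1})$ and summing $g_{V,O}$ over the $e$ sheets of $\phi$ converging to $O$---using that the product of the local preimages of $y$ equals $y/c_e$ up to lower order---one obtains the expansion
\[
(\phi_*g_{V,O})(y) \;=\; -\log|y| + \log|c_e| + e\gamma_V + o(1),
\]
where $\gamma_V$ is the Robin constant of $V$ at $O$. Thus $\phi_*g_{V,O}$ behaves as a Green's function for $W$ with pole at $0$, whose Robin constant is $\gamma_W=\log|c_e|+e\gamma_V=\log\|\phi^{[e]}(O)\|^{\mathrm{cap},\otimes e}_{V,O}$. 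The hypothesis then reads $\gamma_W<0$, and $\phi\circ\sigma=\overline{\phi}$ gives $\overline{W}=W$, so $W$ is a conjugation-symmetric compact subset of $\mathbb{C}$ whose capacity at $0$ is strictly less than $1$.

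For the second step I would reduce the target inequality to $\partial W$ and then construct the polynomial. The function $1/p(1/\phi)$ extends holomorphically across $O$ (as a zero of order $en$) and is holomorphic on $V$ provided the roots of $p$ avoid $(1/\phi)(V)$; the maximum principle then gives $\inf_V|p(1/\phi)|=\inf_{\partial V}|p(1/\phi)|$. Setting $Q(y):=y^np(1/y)$, which has constant term $1$ when $p$ is monic of degree $n$, the inequality becomes $|Q(y)|>|y|^n$ on $\partial W$. I would construct such $Q$ as a Fekete/Leja-type extremal polynomial for the Chebyshev problem at $0$ for $W$, with zeros placed on $\mathbb{C}\setminus W^\circ$ and distributed to approximate the equilibrium measure of $g_{W,0}$; averaging with the $\sigma$-conjugate produces $Q\in\mathbb{R}[X]$, and inverting the substitution yields the required monic $p\in\mathbb{R}[X]$.

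The main obstacle is that the classical Fekete--Szeg\H{o}-at-$0$ polynomials satisfy only the $n$-th root asymptotic $\|Q_n\|_W^{1/n}\to e^{\gamma_W}$, while the maximum principle applied to the harmonic function $\phi_*g_{V,O}(y)+\log|y|-\gamma_W$ (which vanishes at the origin) forces $\sup_{\partial W}|y|\ge e^{\gamma_W}$; at a maximal point $y^\star\in\partial W$, the naive bound $|Q_n(y^\star)|^{1/n}\approx e^{\gamma_W}$ thus goes in the wrong direction relative to $|y^\star|$. The actual argument must therefore solve a \emph{weighted} Chebyshev problem---minimizing $\sup_{\partial W}|y|^n/|Q(y)|$ rather than $\sup_{\partial W}|Q(y)|$ alone---and its solvability with value strictly less than $1$, which is precisely the Fekete--Szeg\H{o}-type approximation referenced in Theorem~1.1, is the quantitative heart of the conjecture: the capacity gap $\gamma_W<0$ must be converted into a strict pointwise inequality on all of $\partial W$, requiring more than the classical equilibrium estimate.
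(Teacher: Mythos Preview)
The statement you are attempting to prove is \emph{Conjecture~1} in the paper; the paper does not prove it and explicitly leaves it open, so there is no ``paper's own proof'' to compare your proposal against. In fact, the approach you outline---push forward $g_{V,O}$ by $\phi$, read off a Robin constant at $0$ smaller than zero, and then try to manufacture a Fekete--Szeg\H{o}/Leja polynomial---is precisely the ``naive attempt'' the paper flags in the introduction as insufficient: the resulting extremal polynomials control $|p(1/\phi)|$ only in the $n$-th--root sense and only away from the Fekete nodes, not pointwise on all of $V$. You recognise this yourself in your final paragraph, where you note that what is actually needed is a solution of the weighted Chebyshev problem $\sup_{\partial W}|y|^n/|Q(y)|<1$ and that converting the capacity gap $\gamma_W<0$ into a strict pointwise inequality is ``the quantitative heart of the conjecture.'' That admission is accurate: your proposal sketches the reduction but does not supply the missing step, so it is not a proof.

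There is also a smaller technical slip in your first step. Your expansion $(\phi_*g_{V,O})(y)=-\log|y|+\log|c_e|+e\gamma_V+o(1)$ accounts only for the $e$ sheets of $\phi$ collapsing to $O$; if $\phi$ has other zeros on $V$, the pushforward picks up the additional constant $\int_V g_{V,O}\,\delta_{\phi^*(0)-eO}$, exactly the term appearing in the paper's pushforward formula (Corollary~3.6). The paper's discussion of the naive attempt handles this by shrinking $V$ to a subsurface $W$ on which $O$ is the sole zero, invoking monotonicity of the capacitary norm; you should either do the same or carry that extra term through. Even with this correction, however, the core obstruction remains: no argument in your proposal (nor in the paper) produces a monic $p$ with $\inf_V|p(1/\phi)|>1$, and that is why the statement is recorded as a conjecture.
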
 \noindent We will also show that any $\phi$ in the statement of Theorem $1.1$ satisfies the hypotheses of Conjecture $1$. \newline \newline \noindent In the sense of Bost-Charles, see [2], we see that Conjecture $1$ resolves the Bost-Charles Conjecture under the assumption that $\tilde{\mathcal{V}}$ admits a single nonconstant monic regular function. For example, for any $\tilde{\mathcal{V}}$ such that $\varphi$ extends holomorphically on some neighborhood of $V$, the hypothesis is satisfied, resolving the conjecture for all such formal-analytic arithmetic surfaces. \newline \newline Our second main result is a pushforward formula for the equilibrium Green's function of $V$ at $O$ of independent interest that could prove quite useful in proving Conjecture $1$ (see Corollary $3.6$): \begin{theorem} Given a holomorphic $f: V^+ \to \mathbb{C}$ with $f(O)=0$, we have the following identity on $V^+ \backslash \{O\}$: $$(f_*g)(f(x))-(\int_V g\delta_{f^*(f(O))-eO}+\log||f^{[e]}(O)||_{V,O}^{\text{cap},\otimes e})=\int_{V} \log|\frac{1}{f(x)}-\frac{1}{f(x')}|\mu(x')$$ \end{theorem}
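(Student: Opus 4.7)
The plan is to identify both sides of the identity by expressing each as an integral against the equilibrium measure $\mu$ on $\partial V$, exploiting its defining property as the harmonic measure at $O$: for any $u$ continuous on $\overline{V}$ and harmonic on $V^{\circ}$, one has $u(O) = \int_{\partial V} u\,d\mu$.

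First I compute $(f_*g)(f(x))$ for generic $x \in V^+ \setminus \{O\}$. Let $y_1,\dots,y_d$ be the distinct preimages of $f(x)$ with multiplicities $m_j$, and consider
$$H_x(y) := \log|f(y)-f(x)| + \sum_j m_j\, g(y, y_j),$$
where $g(\cdot,y_j)$ denotes the Green's function with pole at $y_j$. The log-poles of $\log|f(\cdot)-f(x)|$ at the $y_j$ (of weight $m_j$) cancel those of the $g(\cdot,y_j)$ terms, so $H_x$ is harmonic on $V^{\circ}$ and continuous on $\overline{V}$; its boundary values equal $\log|f(y)-f(x)|$ since $g(\cdot,y_j)$ vanishes on $\partial V$. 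Evaluating at $O$, using $f(O)=0$, the symmetry $g(y_j,O)=g(O,y_j)=g(y_j)$, and $(f_*g)(f(x)) = \sum_j m_j g(y_j)$, yields
$$(f_*g)(f(x)) + \log|f(x)| = \int_{\partial V}\log|f(y)-f(x)|\,d\mu(y). \quad (\star)$$

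Second, writing $f^*(f(O))-eO = \sum_i m_i P_i$ with the $P_i$ the preimages of $0$ distinct from $O$, I apply the same trick to $\log|f(y)|$: the function
$$K(y) := \log|f(y)| + e\,g(y, O) + \sum_i m_i\, g(y, P_i)$$
extends to a harmonic function across $O$ and each $P_i$. In a local coordinate $z$ at $O$ with $f = c z^e + O(z^{e+1})$ and Robin constant $\rho_z(O)$, a direct computation gives $K(O) = \log|c| + e\rho_z(O) + \sum_i m_i g(P_i)$; by the normalization of the capacitary norm --- namely $\log||(dz)^{\otimes e}||^{\text{cap},\otimes e}_{V,O} = e\rho_z(O)$ together with $f^{[e]}(O) = c\,(dz)^{\otimes e}$ --- this rewrites as $\log||f^{[e]}(O)||^{\text{cap},\otimes e}_{V,O} + \int_V g\,\delta_{f^*(f(O))-eO}$. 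Applying harmonic measure at $O$:
$$\log||f^{[e]}(O)||^{\text{cap},\otimes e}_{V,O} + \int_V g\,\delta_{f^*(f(O))-eO} = \int_{\partial V}\log|f(y)|\,d\mu(y). \quad (\star\star)$$

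Subtracting $(\star\star)$ from $(\star)$, and using that $\mu$ is a probability measure so that $\log|f(x)|$ may be absorbed under the integral sign, produces
$$\int_{\partial V}\bigl[\log|f(y)-f(x)|-\log|f(y)|-\log|f(x)|\bigr]\,d\mu(y) \;=\; \int_{V}\log\Bigl|\frac{1}{f(x)}-\frac{1}{f(x')}\Bigr|\,\mu(x'),$$
which is the desired identity for generic $x$; continuity of both sides on $V^+\setminus\{O\}$ then extends it everywhere. The principal technical point is matching normalization conventions so that the Taylor coefficient $|c|$ and the Robin correction $e\rho_z(O)$ combine correctly into $\log||f^{[e]}(O)||^{\text{cap},\otimes e}_{V,O}$; modulo this bookkeeping, the argument reduces to two clean applications of the harmonic-measure representation, which is essentially a Riemann-surface version of Jensen's formula.
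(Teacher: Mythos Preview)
Your argument is correct and in fact lands on exactly the same two intermediate identities that the paper isolates as Proposition~3.4 and Proposition~3.5: your $(\star)$ is Proposition~3.4 rewritten, and your $(\star\star)$ is Proposition~3.5. The difference lies in how each is obtained. For $(\star)$ the paper invokes an identity from Bost--Charles (their equation~(5.4.4), on Green's functions of the diagonal), whereas you prove it directly by cancelling the log-poles of $\log|f(\cdot)-f(x)|$ against auxiliary Green's functions $g(\cdot,y_j)$ and applying the harmonic-measure property at $O$ together with the symmetry $g(y_j,O)=g(O,y_j)$; this is essentially the content behind the cited identity, made self-contained. For $(\star\star)$ the divergence is sharper: the paper integrates Proposition~3.4 against $\mu$ and then appeals to the Bost--Charles overflow formula (Proposition~5.4.2 of~[2]) to identify $\int_{\partial V}\log|f|\,d\mu$, while you read it off immediately from a second Jensen-type cancellation, extracting the Robin constant and leading coefficient at $O$ by hand. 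Your route is more elementary and avoids the overflow machinery entirely; the paper's route, by passing through $\mathrm{Ex}(f:(V,O)\to\mathbb{C})$, ties the constant term to an invariant that is used elsewhere in the paper, which is presumably why the author chose it. Your remark that the argument is ``essentially a Riemann-surface version of Jensen's formula'' is apt, and the bookkeeping you flag about the capacitary norm does work out: with the paper's convention $\mathrm{cap}_O(V)=e^{-\rho}$ on $T_OV$, duality gives $\|dz\|^{\mathrm{cap}}_{V,O}=e^{\rho}$, so $\log\|f^{[e]}(O)\|^{\mathrm{cap},\otimes e}_{V,O}=\log|c|+e\rho$ as you need.
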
 \noindent This is a generalization of the classical identity for the Green's function for holomorphic pushforwards. One naive attempt to prove Conjecture $1$ using Theorem $1.2$ on $\phi$ first changes the Riemann surface by removing balls around the other zeros (besides $O$) of $\phi$ so that $0$ only has one preimage on $W$ without changing the image of $\phi$. A monotonicity property of capacitary norms then reduces the claim to one on $W$, but an approximation argument for the equilibrium measure, giving Fekete polynomials, fails to prove Conjecture $1$ since the resulting polynomials do not satisfy the condition on all of $V$ (but rather only away from the Fekete points). \newline \newline \noindent We now briefly outline the paper. In Section $2$, we define Riemann surfaces in our context, define the notion of Green's functions and equilibrium Green's functions, the notion of pushforward, and one of capacitary norms. In Section $3$, we build on the work of Bost-Charles to prove  a new formula for the pushforwards of Green's function that relates to Arakelov degree, generalizing the classical formula for the Green's function. We also prove that $\phi$ satisfies both hypotheses of Conjecture $1$ by relating the constant term appearing in the pushforward formula with $\phi$ to Arakelov degree and analyzing the antiholomorphic involution on $V^+$. Finally, in Section $4$, we use some number theoretic approximation lemmas to prove Theorem $1.1$. 
\section{Riemann Surfaces a la Bost-Charles}
First we define Riemann surfaces in our context. We will follow the exposition of section $4.3.1.2$ of [2] throughout. \begin{defin} A Riemann surface with boundary is a pair $(V^+,V)$, where $V^+$ is a (connected) Riemann surface without boundary that is a germ of a Riemann surface $V$ along a $\mathcal{C}^{\infty}$-submanifold with boundary $V$ of $V^+$ (of codimension $0$). Furthermore, we set $\partial V=V^+ \backslash V^{\circ}$, where $V^{\circ}$ is the usual interior of $V$. We also assume that $\partial V$ is nonempty. \end{defin} \begin{remark} This can be thought of as a Riemann surface with boundary $V$ with a slight thickening beyond its usual boundary, giving a small open neighborhood of the manifold $V$, thus smoothly embedding $V$ in $V^+$. \end{remark} Now we can recall the definition of the equilibrium Green's function on a Riemann surface, suitably modified for our case. \begin{defin} Let $(V^+, V)$ be a Riemann surface and $O$ a point of $V^{\circ}$. Then the equilibrium Green's function $g_{V,O}$ of $P$ in $V$ is the unique function $g_{V,O}: V^+ \backslash \{O\} \to \mathbb{R}$ satisfying the following conditions: \newline \newline
1) $g_{V,O}$ is continuous on $V^+ \backslash \{O\}$ and vanishes on $V^+ \backslash V^{\circ}$ \newline \newline
2) $g_{V,O}$ is harmonic on $V^{\circ} \backslash \{O\}$ \newline \newline 
3) $g_{V,O}$ has a logarithmic singularity at $O$, namely if $z$ is a local coordinate of $V$ at $O$, then $g_{V,O}-\log(|z|^{-1})$ is bounded on some pointed neighborhood of $O$. \end{defin} \begin{remark} It immediately follows from the maximum principle for harmonic functions that $g_{V,O}$ is strictly positive on $V^{\circ} \backslash \{O\}$. Furthermore, note that the Green's function on $V$ is independent of the choice of germ, and so we often talk about the Green's function itself without referring to the choice of germ, which is $V^+$ as a whole. \end{remark} As an easy consequence of the uniformization theorem, it turns out that $V^{\circ}$ will admit a hyperbolic structure given this setup, and so it will automatically be equipped with an equilibrium Green's function, which will be induced by standard Green's functions on $\mathbb{D}$ (see [5]). This generalizes the intuition that non-hyperbolic Riemann surfaces cannot be extended since they are already nearly as ``big" as possible. \newline \newline We will now give our definition of capacities, as mentioned in Theorem $1.1$: \begin{defin} Given the equilibrium Green's function on $V^+$ at a point $P \in V^{\circ}$ and a choice of local coordinate $z$ at $P$ so that $z(O)=0$, we define $\text{cap}_O(V):=e^{-c}$, where $c=\lim_{w \to O} g_{V,O}(w)-\log(|z(w)|^{-1})$, a limit which exists by standard results on logarithmic singularities. \end{defin} \begin{remark} A local coordinate $z$ will be fixed throughout, and our results and key quantities of interest (such as Arakelov degree) will be independent of a choice of local coordinate \end{remark} \begin{defin} Given a current $\phi$ on $V$ and a holomorphic mapping $f: V \to W$ of Riemann surfaces with finite fibers on $\text{supp}(\phi)$, we can define the pushforward current $f_*\phi$ on $W$ by $f_*\phi(y)=\int_V \phi\delta_{f^*(y)}$. \end{defin} The pushforward current can be rewritten as $f_*\phi(y)=\sum_{x \in f^{-1}(y)} e_x\phi(x)$, where $e_x$ is the ramification index at $x$. \begin{lemma} Let $f: V^+ \to W$ be a holomorphic map of Riemann surfaces that is nonconstant on $V$ and $g$ a Green's function for $V^+$. Then $f$ has finite fibers on $\text{supp}(g)$. \end{lemma}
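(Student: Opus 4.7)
My plan is to reduce the statement to two classical facts: the identity principle for holomorphic maps on connected Riemann surfaces, and the observation that a discrete subset of a compact set is finite. The key reduction is to show $\mathrm{supp}(g) \subseteq V$, then use compactness of $V$ together with discreteness of fibers.

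First I would unpack what $\mathrm{supp}(g)$ is. By Definition 2.3(1), $g = g_{V,O}$ vanishes on $V^+ \setminus V^\circ$, and $g$ is continuous away from $O$. Since $g > 0$ on $V^\circ \setminus \{O\}$ (per Remark 2.4) and $g \to +\infty$ at $O$, the support is exactly the closure $\overline{V^\circ} = V$. In the Bost--Charles setting we follow (cf.\ section $4.3.1.2$ of [2]), $V$ is compact, so $\mathrm{supp}(g)$ is compact.

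Next I would promote the hypothesis that $f$ is nonconstant on $V$ to the stronger statement that $f$ is nonconstant on all of $V^+$. Since $V^+$ is connected (Definition 2.1) and $V^\circ$ is a nonempty open subset of $V^+$, any holomorphic function constant on $V$ would be constant on $V^\circ$, hence constant on $V^+$ by the identity principle. Therefore $f$ being nonconstant on $V$ forces $f$ to be nonconstant on the connected Riemann surface $V^+$.

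From here the identity principle for holomorphic maps between Riemann surfaces gives that every fiber $f^{-1}(y) \subseteq V^+$ is a discrete closed subset of $V^+$. Intersecting with the compact set $\mathrm{supp}(g) = V$, we obtain that $f^{-1}(y) \cap \mathrm{supp}(g)$ is a discrete subset of a compact space, hence finite for every $y \in W$. This is precisely the assertion that $f$ has finite fibers on $\mathrm{supp}(g)$. I do not anticipate any serious obstacle in this argument; the only subtlety is verifying that nonconstancy on the submanifold with boundary $V$ propagates to $V^+$, which is handled by connectedness of $V^+$ and the identity principle as above.
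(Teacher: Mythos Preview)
Your proposal is correct and follows essentially the same route as the paper: reduce to $\mathrm{supp}(g)\subseteq V$, then use compactness of $V$ together with the identity principle to force finiteness of fibers. The paper phrases the last step as a sequential limit-point argument (introducing a conformal metric on $V^+$ for convenience), whereas you invoke ``discrete closed subset of a compact set is finite'' directly and are a bit more explicit about promoting nonconstancy from $V$ to the connected $V^+$; these are cosmetic differences only.
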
 \begin{proof} Recall that the Green's function is supported on $V$, meaning that we just need to show that $f$ has finite fibers on $V$. Note that $V$ is a connected compact Riemann surface and that $f$ is holomorphic on a neighborhood of $V$. Choosing the conformal metric on the Riemann surface $V^+$ endows $V^+$ with the structure of a metric space. Now given any $x \in V$ and an infinite sequence of preimages $\{x_n, n \in \mathbb{N}\}$ of $x$, the $x_i$s have a limit point $v$ in $V$, and taking a holomorphic chart containing $v$ shows that $f$ is constant on some neighborhood of $v$, thus on all of $V$ since $V$ is connected, contradiction. \end{proof} \begin{defin} In light of Definition $2.3$ and Lemma $2.8$, we can define the pushforward $f_*g$ of a Green's function on $V$ via any nonconstant holomorphic map $f: V^+ \to W$ of Riemann surfaces, and this map will be given by $f_*g(w)=\sum_{v \in f^{-1}(w)} g(v)$. \end{defin}
\section{Bost-Charles Overflow and Arakelov Degree}
We will start by defining the Bost-Charles overflow, following Chapter 5 of [2].
\begin{defin} Let $f: V^+ \to N$ be a holomorphic map between Riemann surfaces and $O$ a point of $V^{\circ}$. Let $g:=g_{V,P}$ be the equilibrium Green's function for $V^+$ at $P$ and $\mu:=\mu_{V,P}$ the corresponding equilibrium measure. Furthermore, let $e$ the ramification index of $f$ at $O$. Then we define $$\text{Ex}(f: (V,O) \to N):=\int_V g\delta_{f^*(f(O))-eO}+\int_N f_*gf_*\mu$$ \end{defin} Given a holomorphic map with ramification index $e$, we can also consider the $e$th jet of $f$, a $\mathbb{C}$-vector space generated by the element $f^{[e]}(O)$, and equip it with a capacitary norm $||\cdot||_{V,O}^{\text{cap},\otimes e}$ which is the norm on $(T_OV^{\vee})^{\otimes e}$ induced by duality and tensor power from the capacitary norm on $T_OV$. We now recall an important result of $[2]$:
\begin{prop} (Proposition $5.4.2$ of [2]) For any nonconstant holomorphic map $f: V^+ \to \mathbb{C}$, we have that $$\text{Ex}(f: V^+ \to \mathbb{C})=\int_{(\partial V)^2} \log|f(z_1)-f(z_2)|d\mu(z_1)d\mu(z_2)-\log||f^{[e]}(O)||_{V,O}^{\text{cap},\otimes e}$$ \end{prop}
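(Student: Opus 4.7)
The plan is to derive the identity from two applications of the Green's representation formula for subharmonic functions on $V^+$, applied to the map $z_1 \mapsto \log|f(z_1) - f(z_2)|$: first with $z_2=O$, which extracts the capacitary norm contribution, and then with $z_2$ varying and integrated against $\mu$, which builds the double integral. The two resulting identities then pair up to express $\text{Ex}(f)$ exactly as the claimed right-hand side.

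For the first application I would study
$$u(z) := \log|f(z)-f(O)| + e \cdot g_{V,O}(z).$$
The logarithmic singularities at $O$ cancel and $u$ extends continuously with $u(O) = \log|a_e| + ec$, where $a_e = f^{(e)}(O)/e!$ is the leading coefficient in the fixed local coordinate at $O$ and $c=-\log \text{cap}_O(V)$ is the Robin constant; by the construction of the capacitary norm on $(T_O^{\vee} V)^{\otimes e}$ this limit equals $\log||f^{[e]}(O)||_{V,O}^{\text{cap},\otimes e}$. The distributional identity $dd^c g_{V,O} = \mu - \delta_O$ on $V^+$ gives the Riesz measure of $u$ as $\sum_{x \in f^{-1}(f(O)) \setminus \{O\}} e_x \delta_x + e \mu$. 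Applying the Riesz--Green representation at the pole $O$, where harmonic measure coincides with $\mu$, and using $g_{V,O} = 0$ on $\partial V$ yields
$$\log||f^{[e]}(O)||_{V,O}^{\text{cap},\otimes e} = \int_{\partial V} \log|f(z) - f(O)| d\mu(z) - \sum_{x \in f^{-1}(f(O)) \setminus \{O\}} e_x g(x).$$

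For the second application, for each $z_2 \in \partial V$ the function $z_1 \mapsto \log|f(z_1)-f(z_2)|$ is subharmonic on $V^+$ with Riesz measure $\sum_{v \in f^{-1}(f(z_2))} e_v \delta_v$, and its Green's representation at $O$ gives
$$\log|f(O)-f(z_2)| = \int_{\partial V} \log|f(z_1)-f(z_2)| d\mu(z_1) - \sum_{v \in V^{\circ} \cap f^{-1}(f(z_2))} e_v g(v).$$
I would then integrate against $\mu(z_2)$; using that for $\mu$-almost every $z_2 \in \partial V$ all preimages of $f(z_2)$ are unramified and boundary preimages contribute zero (since $g$ vanishes on $\partial V$), the right-hand sum becomes $(f_*g)(f(z_2))$ outside a $\mu$-null set, producing
$$\int_{\partial V} \log|f(O)-f(z)| d\mu(z) = \int_{(\partial V)^2} \log|f(z_1)-f(z_2)| d\mu(z_1) d\mu(z_2) - \int_{\partial V} (f_*g)(f(z)) d\mu(z).$$
Combining this with the first identity and recognizing $\text{Ex}(f) = \sum_{x \neq O} e_x g(x) + \int_{\partial V} (f_*g)(f(z)) d\mu(z)$ completes the argument.

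The main technical obstacle is rigorously justifying the Riesz-type decomposition $dd^c g_{V,O} = \mu - \delta_O$ and the resulting boundary contribution $e\mu$ in the Riesz measure of $u$ in the bordered setting of Definition $2.1$, since $V$ is only a $C^\infty$ submanifold-with-boundary of $V^+$ rather than a smoothly bounded plane domain; one must verify that the Green's--Poisson--Jensen representation extends cleanly across $\partial V$ and that the exchange of summation and integration is valid given the possible presence of ramified preimages on the boundary, which fortunately form a $\mu$-null subset.
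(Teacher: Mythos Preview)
The paper does not supply its own proof of this proposition; it is quoted as Proposition~5.4.2 of Bost--Charles~[2] and used as a black box. So there is nothing in the present paper to compare against, and your outline constitutes an independent argument.

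Your approach via two applications of the Poisson--Jensen formula is correct and assembles the identity cleanly. The first application, to $u(z)=\log|f(z)-f(O)|+e\,g_{V,O}(z)$, correctly extracts
\[
\log||f^{[e]}(O)||^{\text{cap},\otimes e}_{V,O}=\int_{\partial V}\log|f(z)-f(O)|\,d\mu(z)-\int_V g\,\delta_{f^*(f(O))-eO},
\]
and the second, integrated over $z_2$, gives
\[
\int_{\partial V}\log|f(O)-f(z)|\,d\mu(z)=\int_{(\partial V)^2}\log|f(z_1)-f(z_2)|\,d\mu(z_1)d\mu(z_2)-\int_N f_*g\,f_*\mu.
\]
Equating the common left-hand sides and rearranging reproduces $\text{Ex}(f)$ on one side and the claimed expression on the other. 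The caveats you flag are the right ones: Poisson--Jensen on a compact bordered surface with $C^\infty$ boundary is standard, the ramification and level sets of $f$ on $\partial V$ are finite hence $\mu$-null, and the logarithmic singularities of $\log|f(z_1)-f(z_2)|$ along $\{f(z_1)=f(z_2)\}\subset(\partial V)^2$ are $\mu\times\mu$-integrable, so Fubini applies.

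By way of comparison, the proof in~[2] (judging from the machinery this paper later borrows from it, namely the Green's function of the diagonal $g_{\mathbb{C}}(z,z')=\log|z-z'|^{-1}$ and identity~(5.4.4) invoked in the proof of Proposition~3.4) packages both of your Poisson--Jensen steps into a single pullback computation with $(f,f)^*g_{\mathbb{C}}$ on $V\times V$. Your route is more hands-on and avoids that formalism; the Bost--Charles version is more uniform and extends more directly to targets $N$ other than $\mathbb{C}$.
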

 The main goal for this section will be to produce a formula that specifically gives information about the pushforward Green's function itself.
 \begin{lemma} Suppose that $f$ is nonvanishing on $\partial V$. Then the integral $\int_{V} \log|f(x')|\mu(x')$ is finite. \end{lemma}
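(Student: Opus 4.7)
The plan is to reduce the problem to integration of a continuous function over the compact set $\partial V$ against a finite measure. I first recall, following the construction in [2], that the equilibrium measure $\mu = \mu_{V,O}$ associated to the equilibrium Green's function $g_{V,O}$ is a probability measure supported on $\partial V$ (indeed $\mu$ is, up to a $2\pi$ factor, the distribution $\Delta g_{V,O}$ away from $O$, and $g_{V,O}$ vanishes on $V^+ \setminus V^\circ$ while being harmonic on $V^\circ \setminus \{O\}$). Consequently the integral $\int_V \log|f(x')|\,\mu(x')$ equals $\int_{\partial V} \log|f(x')|\,\mu(x')$, and only the behavior of $f$ on $\partial V$ is relevant.

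Next, since $f$ is holomorphic on a neighborhood of $V$ in $V^+$, in particular $f$ is continuous on $V$, hence on $\partial V$. By the hypothesis that $f$ is nonvanishing on $\partial V$, the function $\log|f|$ is continuous on $\partial V$. Because $V$ is a compact Riemann surface with boundary, $\partial V$ is compact, so $\log|f|$ attains a finite maximum and minimum there. Combining this uniform bound with $\mu(\partial V) = 1$ immediately yields
\[
\left| \int_V \log|f(x')|\,\mu(x') \right| \le \sup_{x' \in \partial V} \bigl| \log|f(x')| \bigr| < \infty,
\]
as desired.

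There is essentially no hard step here: the lemma is bookkeeping that isolates the single issue — potential logarithmic blow-up of $\log|f|$ at zeros of $f$ — and the hypothesis excludes such zeros from the support of $\mu$. The only subtlety worth flagging is the appeal to the support of $\mu$, which is the reason the hypothesis need only concern $\partial V$ rather than all of $V$; this is the point at which the specific structure of the equilibrium measure, rather than a generic finite measure on $V$, is used.
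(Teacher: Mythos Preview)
Your proof is correct and follows essentially the same approach as the paper: reduce to $\partial V$ using that $\mu$ is supported there, then use continuity of $\log|f|$ on the compact set $\partial V$ (from the nonvanishing hypothesis) to bound the integrand and conclude finiteness. Your version is in fact more economical than the paper's, which works through local $\mathcal{C}^{\infty}$ charts and a partition of unity to establish the same boundedness that you obtain directly from continuity on a compact set.
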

 \begin{proof} Since $\partial V$ is $\mathcal{C}^{\infty}$, for each $x \in \partial V$, we can find a chart $\phi_1: U_x \to \partial V$ around $x$ such that $f \circ \phi_1: U_x \to \mathbb{C}$ is $C^{\infty}$. Shrink $U_x$ slightly to some $V_x$ so that $\bar{V}_x$ is contained in $U_x$, which means that $f \circ \phi_1$ is continuous on the compact set $\bar{V}_x$. Since $f$ is nonvanishing on $\partial V$, $\log|f \circ \phi_1|$ is continuous, thus bounded on $\bar{V}_x$, hence on $V_x$. By compactness of $\partial V$, the cover of all $V_x$s can be reduced to a finite cover, and then using a partition of unity, the integral becomes a finite sum of integrals, each of which are bounded, showing that $\int_{V} \log|f(x')|\mu(x')=\int_{\partial V} \log|f(x')|\mu(x')$ is finite since $\mu$ is supported on $\partial V$, as desired. \end{proof}
We will now obtain a formula for the pushforward Green's function such that the pole is contained in the integral term. Indeed, we have that:
\begin{prop} For any holomorphic $f$ with $f(O)=0$ with $f$ nonzero on $\partial V$ and for any $x$ such that $f(x) \neq 0$, we have that $(f_*g)(f(x))=\int_{V} \log|\frac{1}{f(x)}-\frac{1}{f(x')}|\mu(x')+\int_{V} \log|f(x')|\mu(x')$ \end{prop}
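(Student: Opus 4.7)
The plan is to identify the stated formula as the classical Jensen--Green mean-value identity for the equilibrium pair $(g,\mu)$, applied to the subharmonic function $u(v) := \log|f(v) - f(x)|$, followed by a short algebraic rearrangement. Distributionally on $V^+$ one has $\tfrac{1}{2\pi}\Delta g = \mu - \delta_O$, with $g$ extended by zero outside $V^{\circ}$, so because $g$ vanishes on $\partial V$ Green's identity $\int u\,\Delta g = \int g\,\Delta u$ has no boundary contribution and becomes, for any subharmonic $u$ on a neighborhood of $V$ that is finite at $O$,
$$u(O) = \int_V u\, \mu - \int_V g\, \nu_u,$$
where $\nu_u = \tfrac{1}{2\pi}\Delta u$ denotes the Riesz measure of $u$. \newline

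\noindent Applied to $u(v) = \log|f(v)-f(x)|$, the hypotheses $f(O)=0$ and $f(x)\neq 0$ give $u(O)=\log|f(x)|$, while $\nu_u = \sum_{v' \in f^{-1}(f(x))} e_{v'}\,\delta_{v'}$, so that $\int_V g\,\nu_u = (f_*g)(f(x))$. Rearranging,
$$(f_*g)(f(x)) = \int_V \log|f(x')-f(x)|\,\mu(x') - \log|f(x)|.$$
For $a,b \in \mathbb{C}^{\times}$ one has $|a-b| = |ab|\cdot|a^{-1}-b^{-1}|$; applying this with $a = f(x)$ and $b = f(x')$ and integrating against $\mu$, which is a probability measure supported on $\partial V$ where $f$ is nonvanishing, yields
$$\int_V \log|f(x')-f(x)|\,\mu(x') = \log|f(x)| + \int_V \log|f(x')|\,\mu(x') + \int_V \log\bigl|\tfrac{1}{f(x)}-\tfrac{1}{f(x')}\bigr|\,\mu(x').$$
Substituting back gives the claimed identity. \newline

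\noindent The principal technical issue is justifying Green's identity for the possibly singular $u$: a zero of $f - f(x)$ lying in $V^{\circ}$ contributes exactly a point mass to $\nu_u$ that is already accounted for in the sum defining $(f_*g)(f(x))$, while a zero on $\partial V$ produces a logarithmic singularity of $u$ there which is $\mu$-integrable by the chart-and-partition-of-unity argument of Lemma 3.3. A rigorous derivation can be obtained either by mollification of $u$ and exhaustion of $V$ by compact subsets avoiding the zeros, or by direct appeal to the Riesz representation of subharmonic functions on bordered Riemann surfaces; alternatively, one can view the intermediate identity as the pointwise (pre-integration) version of Proposition 3.2 applied to the shifted function $f - f(x)$, and adapt the proof of that proposition in place. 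Finiteness of the two integrals appearing in the statement follows from Lemma 3.3 together with these same local considerations at zeros of $f-f(x)$.
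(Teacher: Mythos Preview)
Your proof is correct and follows the same overall arc as the paper's: both establish the intermediate identity
\[
(f_*g)(f(x)) \;=\; \int_{V}\log|f(x')-f(x)|\,\mu(x') \;-\; \log|f(x)|
\]
and then carry out the identical algebraic rearrangement using $|a-b|=|ab|\cdot|a^{-1}-b^{-1}|$ together with $\mu(V)=1$ and Lemma~3.3. The only difference lies in how that intermediate identity is obtained. The paper invokes formula~(5.4.4) of Bost--Charles~[2], phrased in terms of the Green's function of the diagonal $g_{\mathbb{C}}(z,z')=\log|z-z'|^{-1}$ and the current identity $\int_V f_*g\,f_*\delta_x=-\int_V (f,f(x))^*g_{\mathbb{C}}(\mu-\delta_O)$, whereas you derive it from first principles as the Jensen--Riesz formula for the subharmonic function $u=\log|f(\cdot)-f(x)|$, using the distributional equation $\tfrac{1}{2\pi}\Delta g=\mu-\delta_O$. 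Your route is more self-contained and makes the potential-theoretic content explicit, at the cost of the technical justification of Green's identity that you correctly flag; the paper's route is shorter but treats the Bost--Charles formula as a black box. The two are the same fact in different packaging, so this is best described as a difference of presentation rather than of method.
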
 \begin{proof} We will need the notion of a Green's function of the diagonal associated to a $2$-form $\beta$ (see section $5.3.1$ of $[2]$). We will work with the $2$-form $\beta=0$ on $\mathbb{C}$, which has associated Green's function of the diagonal given by $g_{\mathbb{C}}: \mathbb{C}^2 \to (-\infty, \infty], (z,z') \to \log|z-z'|^{-1}$. We start by noting from $(5.4.4)$ and the definition of $f^*$ that $$\int_V f_*gf_*\delta_x=-\int_V (f, f(x))^*g_N(\mu-\delta_P)=\int_{\partial V} \log|f(x)-f(x')|\mu(x')-\log(|f(x)-f(P)|)$$ The key point is that $f_*\delta_x=\delta_{f(x)}$ and so $\int_V f_*gf_*\delta_x=(f_*g)(f(x))$. Thus we have $$(f_*g)(f(x))=\int_{\partial V} \log|f(x)-f(x')|\mu(x')-\log(|f(x)-f(O)|)$$ \newline By assumption, $f(O)=0$, and then since $f$ is nonvanishing on $\partial V$, Lemma $3.3$ implies that $\int_{V} \log|f(x')|\mu(x')$ is finite. Thus we can rewrite our relation in the following way: $$(f_*g)(f(x))=\int_{V} \log|\frac{1}{f(x)}-\frac{1}{f(x')}|\mu(x')+\int_{V} \log|f(x)f(x')|\mu(x')-\log(|f(x)|)$$ Using that $\mu$ is a probability measure on $\partial V$, we immediately find that $$(f_*g)(f(x))=\int_{V} \log|\frac{1}{f(x)}-\frac{1}{f(x')}|\mu(x')+\int_{V} \log|f(x')|\mu(x')$$ completing the proof. \end{proof} Our next goal will be to find an alternative expression for the integral $\int_{V} \log|f(x')|\mu(x')$, ultimately to relate it to an Arakelov degree. We have the following proposition, giving the precise value of the integral: \begin{prop} For any holomorphic $f$ with $f(O)=0$, nonvanishing on $\partial V$, we have that $$\int_{V} \log|f(x')|\mu(x')=\int_V g\delta_{f^*(f(O))-eO}+\log||f^{[e]}(O)||_{V,O}^{\text{cap},\otimes e}$$ \end{prop}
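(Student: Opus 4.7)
The plan is to combine Proposition 3.2 with the integrated form of Proposition 3.4 against the equilibrium measure, watch the double integrals cancel, and solve for the desired integral.

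First I would unfold the definition of $\mathrm{Ex}(f)$ in Proposition 3.2 to obtain
\[
\int_N f_*g\, f_*\mu \;=\; \int_{(\partial V)^2}\log|f(z_1)-f(z_2)|\,d\mu(z_1)\,d\mu(z_2)\;-\;\log\|f^{[e]}(O)\|^{\mathrm{cap},\otimes e}_{V,O}\;-\;\int_V g\,\delta_{f^*(f(O))-eO}.
\]
Then I would rewrite the left-hand side as an integral on $V$: since $f_*\mu$ is the pushforward of $\mu$ and $f_*g$ is a function on $N$, the change-of-variables identity gives $\int_N f_*g\, f_*\mu = \int_V (f_*g)(f(x))\,\mu(x)$.

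Next I would substitute Proposition 3.4 for $(f_*g)(f(x))$, integrate in $x$ against $\mu$, and use that $\mu$ is a probability measure to pull out the $x$-independent term $\int_V \log|f(x')|\mu(x')$. For the remaining double integral I would use the algebraic identity
\[
\log\Bigl|\tfrac{1}{f(x)}-\tfrac{1}{f(x')}\Bigr| \;=\; \log|f(x)-f(x')|\;-\;\log|f(x)|\;-\;\log|f(x')|,
\]
valid wherever $f(x),f(x')\neq 0$. Integrating against $\mu\otimes\mu$ and using Lemma 3.3 (so each logarithmic term is individually integrable because $f$ is nonvanishing on $\partial V=\mathrm{supp}(\mu)$) produces a double integral of $\log|f(x)-f(x')|$ together with $-2\int_V\log|f(x')|\mu(x')$. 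Combining with the previously extracted $+\int_V\log|f(x')|\mu(x')$ gives
\[
\int_V(f_*g)(f(x))\,\mu(x) \;=\; \int_{(\partial V)^2}\log|f(z_1)-f(z_2)|\,d\mu(z_1)\,d\mu(z_2)\;-\;\int_V\log|f(x')|\mu(x').
\]

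Finally I would equate this with the expression obtained from Proposition 3.2, cancel the common double integral of $\log|f(z_1)-f(z_2)|$, and solve for $\int_V\log|f(x')|\mu(x')$ to land on the claimed identity. The only real obstacle is bookkeeping around the integrability of the individual pieces after decomposing $\log|1/f(x)-1/f(x')|$; this is exactly where Lemma 3.3 and the hypothesis that $f$ is nonvanishing on $\partial V$ are used, and this also legitimizes the interchange of $x$ and $x'$ integration by Fubini. Everything else is a bookkeeping matter.
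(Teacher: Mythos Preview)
Your proposal is correct and follows essentially the same route as the paper: integrate Proposition~3.4 against $\mu$, identify $\int_V (f_*g)(f(x))\,\mu(x)$ with $\int_N f_*g\,f_*\mu$, decompose $\log\bigl|\tfrac{1}{f(x)}-\tfrac{1}{f(x')}\bigr|$ into $\log|f(x)-f(x')|-\log|f(x)|-\log|f(x')|$, and then match against Proposition~3.2 to solve for $\int_V\log|f(x')|\,\mu(x')$. The only cosmetic difference is that you lead with Proposition~3.2 whereas the paper leads with the integrated Proposition~3.4; your explicit invocation of Lemma~3.3 and Fubini to justify splitting the double integral is, if anything, a bit more careful than the paper's appeal to ``symmetry.''
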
 \begin{proof} By Proposition $3.4$ and integrating with respect to the equilibrium measure, we immediately find that $$\int_V (f_*g)(f(x))\mu(x)=\int_{(\partial V)^2}  \log|f(x')|\mu(x')\mu(x)+\int_{(\partial V)^2} \log|\frac{1}{f(x)}-\frac{1}{f(x')}|\mu(x')\mu(x)$$ Noting that $\mu$ is an equilibrium measure and using the definition of the pushforward, we get that $$\int_N (f_*g)f_*\mu(x)= \int_{V} \log|f(x')|\mu(x')+\int_{(\partial V)^2} \log|\frac{1}{f(x)}-\frac{1}{f(x')}|\mu(x')\mu(x)$$ Now note that $$\int_{(\partial V)^2} \log|\frac{1}{f(x)}-\frac{1}{f(x')}|\mu(x')\mu(x)=\int_{(\partial V)^2} \log|f(x)-f(x')|\mu(x')\mu(x)-2\int_{V} \log|f(x')|\mu(x')$$ by symmetry. We thus have that $$\int_{V} \log|f(x')|\mu(x')=\int_{(\partial V)^2} \log|f(x)-f(x')|\mu(x')\mu(x)-\int_N f_*gf_*\mu$$ By Proposition $3.2$ and the definition of overflow, we have that $$\int_{(\partial V)^2} \log|f(z_1)-f(z_2)|d\mu(z_1)d\mu(z_2)-\log||f^{[e]}(P)||_{V,P}^{\text{cap},\otimes e}=\int_V g\delta_{f^*(f(P))-eP}+\int_N f_*gf_*\mu$$ and so we find that $$\int_{V} \log|f(x')|\mu(x')=\int_V g\delta_{f^*(f(P))-eP}+\log||f^{[e]}(P)||_{V,P}^{\text{cap},\otimes e}$$ as desired. \end{proof} As an immediate corollary of the work done so far, we have: \begin{corollary} Given the conditions of Proposition $3.5$, we have the following identity on $V^+ \backslash \{O\}$: $$(f_*g)(f(x))-(\int_V g\delta_{f^*(f(O))-eO}+\log||f^{[e]}(O)||_{V,O}^{\text{cap},\otimes e})=\int_{V} \log|\frac{1}{f(x)}-\frac{1}{f(x')}|\mu(x')$$ \end{corollary}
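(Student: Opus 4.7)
The plan is to derive the identity by combining Propositions $3.4$ and $3.5$ directly. Proposition $3.4$ gives, under the stated hypotheses on $f$,
$$(f_*g)(f(x)) = \int_{V} \log\left|\frac{1}{f(x)} - \frac{1}{f(x')}\right|\mu(x') + \int_{V} \log|f(x')|\mu(x'),$$
in which the first term on the right is already the right-hand side of the desired identity, while the second term is precisely the integral whose closed form is computed by Proposition $3.5$.

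Accordingly, I would substitute
$$\int_{V} \log|f(x')|\mu(x') = \int_V g\delta_{f^*(f(O))-eO} + \log||f^{[e]}(O)||_{V,O}^{\text{cap},\otimes e}$$
from Proposition $3.5$ into the formula of Proposition $3.4$, and then move this (constant in $x$) quantity to the left-hand side to recover the identity stated in the corollary. Since the hypotheses of the two propositions coincide, and both are valid on the common locus $\{x \in V^+ : f(x) \neq 0\}$, the substitution is immediate; the identity then extends to all of $V^+ \setminus \{O\}$ with the usual convention that both sides acquire matching logarithmic singularities at any additional zeros of $f$ in $V^+$.

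In this sense there is no real obstacle to overcome at the level of the corollary itself: the analytic content — integrability on $\partial V$ handled by Lemma $3.3$, the manipulation of equilibrium measures, and the emergence of the capacitary norm term via the Bost-Charles overflow formula of Proposition $3.2$ — has already been absorbed into Propositions $3.3$ through $3.5$. The corollary is purely an algebraic rearrangement, whose value lies in exhibiting $(f_*g)(f(x))$ minus an Arakelov-theoretic constant as a single explicit integral of $\log|1/f(x) - 1/f(x')|$ against the equilibrium measure, the form most useful for subsequent applications.
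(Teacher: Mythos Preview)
Your proposal is correct and matches the paper's approach exactly: the paper presents this as ``an immediate corollary of the work done so far,'' i.e., precisely the substitution of Proposition~3.5 into Proposition~3.4 followed by rearrangement that you describe. Your additional remark about the extension from $\{f(x)\neq 0\}$ to $V^+\setminus\{O\}$ via matching logarithmic singularities is a reasonable gloss on a point the paper leaves implicit.
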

As an example of the utility of such an identity, let $V=\hat{\mathbb{C}} \backslash K^{\circ}$ for some compact domain $K$ such that $0 \in K^{\circ}$ and let $\varphi(z)=\frac{1}{z}$, which is holomorphic on $V$. Then Corollary $3.6$ recovers the classical formula for the Green's function at $\infty$ for $K$: $G_K(z, \infty)-V_{\infty}(K)=\int_K \log|z-w|\mu_{\infty}(w)$.
We will now introduce some new definitions:
\begin{defin} Let $\tilde{V}$ be a formal analytic arithmetic surface with gluing map $\psi$ and $X$ a separated scheme of finite type over $\mathbb{Z}$. Then a morphism $\tilde{V} \to X$ consists of the data $(\hat{f},f)$, where $\hat{f}: \hat{V} \to X$ is a morphism of formal schemes over $\mathbb{Z}$ and $f: V \to X(\mathbb{C})$ is a holomorphic map of the underlying Riemann surfaces, such that $\hat{f}=f \circ \psi$. \end{defin} \begin{defin} Recall that there was a canonical section $P: \text{Spec}(\mathbb{Z}) \to \hat{V}$. We define the pushforward $Q$ of $P$ as the section $\hat{f} \circ P: \text{Spec}(\mathbb{Z}) \to X$. \end{defin} Note that $Q$ is an effective Cartier divisor on $X$ and thus that the inverse image $\hat{f}^*Q$ is a Cartier divisor on $\hat{V}$. We can uniquely write $\hat{f}^*Q=eP+R$ for some divisor $R$ disjoint from $P$. \newline \newline 
Now let $f(X) \in \mathbb{Z}[[x]]$ be a power series with $f(O)=0$ and write $f(X)=a_eX^e+\sum_{n=e+1}^{\infty} a_nX^n$, so that $f$ has vanishing order $e$. We suppose that $f(X)$ is monic, meaning that $a_e=1$, and let $\varphi$ be the compositional inverse of the gluing map. As before, set $\phi=f \circ \varphi$. \begin{prop} We have that $$\log||\phi^{[e]}(O)||^{\text{cap}, \otimes e}_{V,O}=e\widehat{\text{deg}}(P^*\overline{N}_P(\tilde{V}))=e\log||d\varphi(O)||^{\text{cap}}_{V,O}$$ \end{prop}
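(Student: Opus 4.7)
The proposition decomposes into two equalities, and both should fall out from local coordinate computations together with the definition of Arakelov degree, with the monicity of $f$ supplying the essential integrality needed on the algebraic side. The plan is to fix a formal generator $T$ of the maximal ideal $\mathcal{I}$ (available since $B$ is formally smooth of relative dimension one at $P$) and a local holomorphic coordinate $z$ on $V^{+}$ at $O$. Writing $c_1 := d\varphi(O)$, these coordinates record $\varphi^{*}(T) = c_1 z + O(z^{2})$, which is the only analytic input the whole argument needs.

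For the second equality, I would unwind the definition of $\widehat{\deg}$ applied to the Hermitian line bundle $P^{*}\overline{N}_P(\tilde V)$ over $\text{Spec}(\mathbb{Z})$. Formal smoothness makes $P^{*}(\mathcal{I}/\mathcal{I}^{2})$ a free $\mathbb{Z}$-module of rank one, with the class of $T$ (or dually $\partial_T$, depending on the tangent/conormal convention) as an integral generator, so the non-archimedean contributions vanish and $\widehat{\deg}(P^{*}\overline{N}_P(\tilde V))$ equals $-\log$ of the capacitary norm of this generator, with the norm transported from $T_{O_\sigma}V_\sigma$ along the identification $d\iota_\sigma = d\varphi(O)^{-1}$. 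A direct coordinate calculation then identifies this norm with $|c_1|^{-1}\,||\partial_z||^{\text{cap}}_{V,O}$, whose $-\log$ rearranges to $\log|c_1| - \log||\partial_z||^{\text{cap}}_{V,O}$, which is precisely the coordinate expression for $\log||d\varphi(O)||^{\text{cap}}_{V,O}$ (viewed as the norm of the operator $d\varphi(O)$ against the $\partial_T$-trivialization of $T_P\widehat{\mathcal V}$).

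For the first equality I would read $\phi^{[e]}(O)$ directly off the composition $\phi = f \circ \varphi$. Substituting $\varphi^{*}(T) = c_1 z + O(z^{2})$ into $f(T) = T^{e} + \sum_{n > e} a_n T^{n}$ yields $\phi(z) = c_1^{e} z^{e} + O(z^{e+1})$, so $\phi^{[e]}(O) = c_1^{e}\,(dz)^{\otimes e}$ in the chosen trivialization, with the monicity of $f$ the reason only the pure power $c_1^e$ survives. Multiplicativity of the capacitary norm under tensor power then gives
$$\log ||\phi^{[e]}(O)||^{\text{cap},\otimes e}_{V,O} = e\bigl(\log|c_1| + \log||dz||^{\text{cap}}_{V,O}\bigr),$$
and comparing with the expression for $e\,\widehat{\deg}(P^{*}\overline{N}_P(\tilde V))$ obtained in the previous paragraph closes the chain. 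Equivalently, monicity says that $f^{[e]}(P)$ is an integral generator of the tensor power $(P^{*}(\mathcal{I}/\mathcal{I}^{2}))^{\otimes e}$, so the Arakelov degree of that bundle picks up only the archimedean, capacitary contribution, which is what this identity records.

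The main obstacle I anticipate is not one of content but of bookkeeping: one must stay vigilant about the tangent-versus-conormal convention for $N_P$, the orientation of the gluing ($\psi$ versus its inverse $\varphi$), and the induced identifications between capacitary norms on several isomorphic one-dimensional $\mathbb{C}$-vector spaces, since a sign or an inverse dropped here kills the identity. Independence of the final formula from the auxiliary choices of $T$ and $z$ follows from the standard transformation laws of the capacitary norm, but deserves a brief check once the conventions are pinned down.
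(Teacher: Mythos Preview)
Your approach is correct, and your second step (the direct jet computation $\phi^{[e]}(O)=c_1^{e}(dz)^{\otimes e}$ from $\phi=f\circ\varphi$ with $a_e=1$) is exactly what the paper does. Where you diverge is in establishing the link to $\widehat{\deg}(P^{*}\overline{N}_P(\tilde V))$: you compute the Arakelov degree directly from the definition, using that $P^{*}(\mathcal I/\mathcal I^{2})$ is free of rank one over $\mathbb{Z}$ so that only the archimedean capacitary term survives, and then chain through the third expression. The paper instead interprets $\phi$ as a morphism $\tilde V\to\mathbb{A}^{1}_{\mathbb{Z}}$ and invokes the Bost--Charles pushforward formula (7.2.17 of [2]),
\[
\widehat{\deg}\,Q^{*}\overline{N}_Q(X)=e\,\widehat{\deg}\,P^{*}\overline{N}_P(\tilde V)+\widehat{\deg}(O)\cdot R-\log\|\phi^{[e]}(O)\|^{\mathrm{cap},\otimes e}_{V,O},
\]
then observes that both $\widehat{\deg}\,Q^{*}\overline{N}_Q(\mathbb{A}^{1})$ and $\widehat{\deg}(O)\cdot R=\log|a_e|$ vanish (the latter by monicity). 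Your route is more elementary and self-contained; the paper's route situates the identity as the degenerate case of a general Riemann--Hurwitz-type relation, which is conceptually pleasant but imports more machinery than is strictly needed here. Your closing caveat about the tangent/conormal and $\psi$/$\varphi$ conventions is well-placed---the paper's own exposition is loose on exactly these points.
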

\begin{proof} Starting with $\phi$, our regular function on the formal-analytic surface $\tilde{V}$, we get a morphism of formal-analytic arithmetic surfaces $\tilde{V} \to \mathbb{A}_{\mathbb{Z}}^1$ induced by the map $f=\hat{f}: \text{Spf}(\mathbb{Z}[[x]])=\hat{\mathcal{V}} \to \text{Spf}(\mathbb{Z}[[x]])$ and the holomorphic map $\phi: V^+ \to \mathbb{C}$, which is immediately seen to be compatible with gluings. \newline \newline Now by $7.2.17$ of $[2]$, we have the following result: $$\widehat{\text{deg}} Q^*\overline{N}_Q(X)=e\widehat{\text{deg}} P^*\overline{N}_P(\tilde{V})+\widehat{\text{deg}}(O) \cdot R-\log||\phi^{[e]}(O)||^{\text{cap}, \otimes e}_{V,O}$$ Since the morphism $\hat{f}$ is given by $f$, we know from Example $7.2.3$ of $[2]$ that the cycle $\widehat{\text{deg}}(O) \cdot R$ corresponds to the divisor of $a_e$ in $P=\text{Spec}(\mathbb{Z})$, which is just the sum of the arithmetic degrees over primes, giving $\log|a_e|=0$ since $f$ is assumed to be monic. Furthemore, since the capacitary norm on $\mathbb{A}^1(\mathbb{C})$ satisfies $||\frac{\partial}{\partial z}||=1$, we also have that $\widehat{\text{deg}} Q^*\overline{N}_Q(X)=0$. Hence we conclude that $\log||\phi^{[e]}(O)||^{\text{cap}, \otimes e}_{V,O}=e\widehat{\text{deg}}(P^*\overline{N}_P(\tilde{V}))$. \newline \newline Now we need to show that $\log||\phi^{[e]}(O)||^{\text{cap}, \otimes e}_{V,O}=e\log||d\varphi(O)||^{\text{cap}}_{V,O}$. Recall that $\phi=f \circ \varphi$ as thus writing $\varphi=\sum_{i=1}^{\infty} c_iz^i$ in a uniformizer $z$, we find that $\phi(z)=a_ec_1^ez^e+O(z^{e+1})$. However, note that $$|c_1|=\frac{||d\varphi(O)||^{\text{cap}}_{V,O}}{|z'(O)|}$$ since it corresponds to the first derivative relative to the choice of local coordinate $z$ while monicity implies that $a_e=1$, implying that $$\phi^{(e)}(z)=e!(\frac{||d\varphi(O)||^{\text{cap}}_{V,O}}{|z'(O)|})^e$$ On the other hand, we have that $\log||\phi^{[e]}(O)||^{\text{cap}, \otimes e}_{V,O}$ is given by $\log(\frac{|\phi^{(e)}(z)(z'(O))^e|}{e!})$ since we're taking the $e$th tensor power and thus extracting the coordinate-free $e$th power coordinate, and so $$\log||\phi^{[e]}(O)||^{\text{cap}, \otimes e}_{V,O}=\log(\frac{|\phi^{(e)}(z)(z'(O))^e|}{e!})=e\log||d\varphi(O)||^{\text{cap}}_{V,O}$$ as desired.  \end{proof} This result implies that if $\tilde{\mathcal{V}}$ is pseudoconvex and $f$ is monic, then $\phi=f \circ \varphi$ satisfies the first hypothesis of Conjecture $1$.
Given our Riemann surface $V^+$ and our antiholomorphic involution $\sigma$ on $V$ (which we assume extends to an antiholomorphic function on some neighborhood of $V$, thus inducing an antiholomorphic involution on $V$). The following result gives another important property of $\phi$, verifying the second hypothesis of Conjecture $1.1$:
\begin{lemma} Let $\phi$ be holomorphic on some neighborhood of $V$. Then for all $v \in V$, we have that $\phi(\sigma(v))=\overline{\phi(v)}$. \end{lemma}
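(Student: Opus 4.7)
The plan is to reduce the identity to two independent facts: one coming from the arithmetic side (that $f$ has real coefficients because $f \in \mathbb{Z}[[T]]$), and one coming from the geometric side (that the gluing map $\psi$, hence its inverse $\varphi$, respects the real structure that produces $\sigma$). Once both are in hand, the identity $\phi(\sigma(v)) = \overline{\phi(v)}$ becomes a one-line chase.

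The first step is to argue that $\varphi \circ \sigma = \overline{\varphi}$ on $V$. Working over $\mathcal{O}_K = \mathbb{Z}$, there is a unique archimedean place with $\sigma_0 = \overline{\sigma_0}$, and the compatibility condition between complex conjugate embeddings in the definition of a formal-analytic arithmetic surface degenerates into the statement that the single datum $(V, O, \iota)$ carries an antiholomorphic involution $\sigma$ compatible with the canonical real structure on $\widehat{\mathcal{V}} = \mathrm{Spf}(B)$ (which is defined over $\mathbb{Z}$). Unraveling this compatibility on the level of formal completions at $O$: the gluing isomorphism $\iota$ (whose geometric realization near $O$ is $\psi$) intertwines the conjugation of formal coordinates induced from $B$ with the action of $\sigma$ on $\widehat{V}_{\sigma,O}$. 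Passing to the compositional inverse gives $\varphi(\sigma(v)) = \overline{\varphi(v)}$ at the level of power series at $O$, and then by uniqueness of holomorphic continuation (both $\varphi \circ \sigma$ and $\overline{\varphi}$ are antiholomorphic on the connected Riemann surface $V^+$ and agree on a neighborhood of $O$) we conclude equality on all of $V$.

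The second step is immediate: because $f \in \mathbb{Z}[[T]] \subset \mathbb{R}[[T]]$, for any $z$ in the disk of convergence of $f$ we have $f(\overline{z}) = \overline{f(z)}$ by continuity and the fact that real polynomial partial sums commute with complex conjugation. Since $\phi = f \circ \varphi$ is assumed holomorphic on a neighborhood of $V$, the values $\varphi(v)$ lie in this disk of convergence for every $v \in V$, and hence $\overline{\varphi(v)}$ does too (the disk of convergence is invariant under conjugation because the radius is a real number).

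Combining the two ingredients yields
\[
\phi(\sigma(v)) = f(\varphi(\sigma(v))) = f(\overline{\varphi(v)}) = \overline{f(\varphi(v))} = \overline{\phi(v)},
\]
which is the claim. The only nontrivial point is the first step, extracting the intertwining property of $\varphi$ from the compatibility clause in the definition of formal-analytic arithmetic surfaces; once one accepts that $\sigma$ is, by construction, the unique antiholomorphic involution compatible with the real structure on $B$, the rest is formal.
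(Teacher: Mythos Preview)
Your local argument near $O$ is correct and is essentially the same as the paper's first step: the compatibility of $\iota$ with the real structure does give $\varphi\circ\sigma=\overline{\varphi}$ as formal power series at $O$, and combined with $f\in\mathbb{Z}[[T]]$ this yields $\phi(\sigma(v))=\overline{\phi(v)}$ on a small neighborhood of $O$.

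The gap is in the extension to all of $V$. You apply the identity principle to $\varphi$, claiming that ``both $\varphi\circ\sigma$ and $\overline{\varphi}$ are antiholomorphic on the connected Riemann surface $V^+$.'' But $\varphi$ is only the inverse of the gluing isomorphism $\iota:\widehat{\mathcal V}_\sigma\to\widehat{V}_{O}$ of \emph{formal} completions; as a holomorphic function it is defined only on some disk around $O$, and there is no hypothesis that it extends to $V$. (Indeed the paper explicitly treats ``$\varphi$ extends holomorphically to a neighborhood of $V$'' as a special case, not the general one.) For the same reason, the sentence ``the values $\varphi(v)$ lie in this disk of convergence for every $v\in V$'' is not meaningful: $\varphi(v)$ need not be defined for general $v\in V$, and even when it is, holomorphicity of $\phi$ on $V$ does not force $\varphi(V)$ to sit inside the disk of convergence of $f$.

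The fix is to run the identity principle on $\phi$ rather than on $\varphi$. Once you know $\phi\circ\sigma=\overline{\phi}$ near $O$, it suffices to check that $\overline{\phi\circ\sigma}$ is holomorphic on a neighborhood of $V$; this follows because $\sigma$ is antiholomorphic and $\phi$ is holomorphic there, so their composite is antiholomorphic and its conjugate is holomorphic. This is exactly the second paragraph of the paper's proof. With that in hand, $\overline{\phi\circ\sigma}$ and $\phi$ are two holomorphic functions on a connected neighborhood of $V$ agreeing near $O$, hence equal everywhere.
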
 \begin{proof} Since $\sigma$ is an antiholomorphic involution and $O$ is a real point, we can choose some chart $z$ on the neighborhood $U'$ of $P$ compatible with the involution centered at $O$ (so that $z(O)=0$) such that $z \circ \sigma=\bar{z}$. By definition, $f$ is compatible with the real structure on $V$, which means that $\phi \circ z^{-1}$, which is a map $U \to \mathbb{C}$ for some open subset $U$ of $\mathbb{C}$ containing $0$ preserved by complex conjugation, has a power series expansion with real coefficients. This means that for any $x \in U$, we have that $\phi(z^{-1}(\overline{x}))=\overline{\phi(z^{-1}(x))}$. Now let $v=z^{-1}(x)$. We just want to show that $z^{-1}(\bar{x})=\sigma(v)$. Note that $\sigma(v)=\sigma(z^{-1}(x))$. But $\sigma \circ z^{-1}=\bar{z}^{-1}$ by definition. But if $z(w)=\bar{x}$, then $\bar{z}(w)=x$, so $\bar{z}^{-1}(x)=w=z^{-1}(\bar{x})$, as desired. Thus we have the equality $\overline{\phi(\sigma(v))}=\phi(v)$ on $U'$. \newline \newline Now we claim that $\overline{\phi(\sigma(v))}$ is holomorphic on some neighborhood of $V$. Choose any point $u$ and chart $w$ containing $u$ in its domain. By definition, we can find a chart $w'$ such that $w'\sigma w^{-1}$ is antiholomorphic on some neighborhood of $u$, and so write $w'\sigma w^{-1}=\bar{g}$. Thus we have that $\sigma w^{-1}=w'^{-1}\bar{g}$. Hence we just need to show that $\overline{\phi \circ w'^{-1} \circ \bar{g}}$ is holomorphic. However, $\phi \circ w'^{-1}$ is holomorphic, so at each point $v$, we can write $$\phi \circ w'^{-1}(y)=\sum_{i=1}^{\infty} a_iy^i$$ in some neighborhood of $v$, which means that $$\overline{\phi \circ w'^{-1}(\bar{g}(x))}=\sum_{i=1}^{\infty} \overline{a_i}g(x)^i$$ if $x$ is in this neighborhood of $u$, proving holomorphicity at $u$ and thus holomorphicity. Hence $\overline{\phi(\sigma(v))}$ and $\phi$ are holomorphic functions on some neighborhood of $V$ that agree on $U'$, which means they coincide on all of $V$, as desired. \end{proof}
Thus we see that for a monic nonconstant regular function $(f,\phi)$ on $\tilde{\mathcal{V}}$, $\phi$ satisfies both hypotheses of Conjecture $1$.
\section{Main Results}
Before proceeding to the main result, we need two more lemmas in the style of Fekete-Szëgo (which adapt a proof outline given in [6]):
\begin{lemma} Let $N$ be a positive integer and $f(x) \in \mathbb{Q}[x]$ a monic polynomial with $\text{deg}(f)=d$. Then there exist arbitrarily large positive integers $M$ such that for all $1 \leq i \leq N$, the coefficient of $x^{dM-i}$ of $f(x)^M$ is an integer. \end{lemma}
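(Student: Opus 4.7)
The plan is to reduce the problem to a divisibility condition on binomial coefficients, then verify that condition prime by prime using a standard $p$-adic estimate. First I will pass to the reverse polynomial: writing $f(x) = x^d + c_{d-1}x^{d-1} + \cdots + c_0$ and $g(y) := y^d f(1/y) = 1 + c_{d-1}y + \cdots + c_0 y^d$, we have $f(x)^M = x^{dM} g(1/x)^M$, so the coefficient of $x^{dM-i}$ in $f(x)^M$ equals the coefficient of $y^i$ in $g(y)^M$. The goal becomes: exhibit arbitrarily large $M$ such that $[y^i]g(y)^M \in \mathbb{Z}$ for every $1 \le i \le N$.

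Next, letting $D \ge 1$ be a common denominator of $c_0,\ldots,c_{d-1}$, I will write $g(y) = 1 + D^{-1} h(y)$ with $h \in y\mathbb{Z}[y]$ and expand via the binomial theorem:
\[
g(y)^M = \sum_{k=0}^{M} \binom{M}{k} D^{-k} h(y)^k.
\]
For $1 \le i \le N$ this gives $[y^i]g(y)^M = \sum_{k=1}^{i} \binom{M}{k} D^{-k}\alpha_{k,i}$ with $\alpha_{k,i} := [y^i]h(y)^k \in \mathbb{Z}$. It therefore suffices to produce arbitrarily large positive integers $M$ for which $D^k \mid \binom{M}{k}$ for every $1 \le k \le N$.

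I will verify this divisibility one prime at a time. Fix a prime $p \mid D$, and suppose $v_p(M) = s$ with $p^s > N$. Then for each $1 \le j \le k-1 \le N-1$ we have $v_p(j) \le \log_p N < s = v_p(M)$, so the ultrametric inequality gives $v_p(M-j) = v_p(j)$; applied to the numerator of $\binom{M}{k} = M(M-1)\cdots(M-k+1)/k!$, this yields the classical identity $v_p\bigl(\binom{M}{k}\bigr) = v_p(M) - v_p(k)$. Since $v_p(k) \le v_p(N!)$ for $k \le N$, choosing any integer $s_p$ with $s_p \ge N v_p(D) + v_p(N!)$ and $p^{s_p} > N$ forces $v_p\bigl(\binom{M}{k}\bigr) \ge k\, v_p(D)$ for all $k \le N$ whenever $p^{s_p} \mid M$. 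Setting $D' := \prod_{p \mid D} p^{s_p}$, every positive multiple of $D'$ then satisfies the required divisibility simultaneously at all primes, yielding arbitrarily large admissible $M$.

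The step I expect to demand the most care is the $p$-adic estimate: one has to check that under the divisibility hypothesis on $M$ the identity $v_p(\binom{M}{k}) = v_p(M) - v_p(k)$ really holds \emph{uniformly} in $k \le N$, and then ensure that the linear-in-$k$ lower bound it supplies dominates the linear-in-$k$ growth of $v_p(D^k) = k v_p(D)$ (which is why $s_p$ must be chosen to grow like $N v_p(D)$). Once that estimate is in place, the combination across primes is a routine Chinese Remainder argument and the rest of the proof is bookkeeping.
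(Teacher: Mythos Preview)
Your proof is correct and follows the same overall strategy as the paper: observe that the top $N$ coefficients of $f(x)^M$ involve at most $N$ non-leading factors, reduce integrality to showing that certain combinatorial coefficients are divisible by a fixed power of a common denominator, and then achieve that divisibility by choosing $M$ with large $p$-adic valuation at every prime dividing the denominator.

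The packaging differs slightly. The paper expands $f(x)^M$ directly via the multinomial theorem and works with multinomial coefficients $\binom{M}{b_0,\ldots,b_d}$ subject to $b_d \ge M-N$, bounding $v_p$ of those. You instead pass to the reverse polynomial $g(y)=y^d f(1/y)=1+D^{-1}h(y)$ and expand by the ordinary binomial theorem, reducing everything to the single clean identity $v_p\binom{M}{k}=v_p(M)-v_p(k)$ for $k\le N<p^{v_p(M)}$. Your route is a bit more streamlined: the reversal turns the ``top $N$ coefficients'' condition into a ``low-degree in $y$'' condition, so only finitely many binomial terms contribute and no multinomial bookkeeping is needed. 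The paper's version is more hands-on but reaches the same $p$-adic threshold (roughly $v_p(M)\gtrsim N v_p(D)$) by essentially the same mechanism. Either way, the set of admissible $M$ is the set of multiples of a fixed modulus, which gives the ``arbitrarily large'' conclusion.
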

\begin{proof} Write $f(x)=\sum_{j=0}^{d} a_jx^j$ (so that $a_d=1$ and set $a_j=\frac{p_i}{q_i}$ in lowest terms (so $q_i=1$ if $a_i=0$). Let $k=\prod_{j=0}^{d-1} q_j$. For any positive integer $n$ and $0 \leq i \leq N$, the coefficient $b_{nd-i}$ of $x^{nd-i}$ of $f(x)^n$ will be a sum of terms of the form $\prod a_i$, with exactly $n$ $a_i$s in the product, such that there are at most $N$ terms in the product with $i \neq d$. This means that $k^Nb_{nd-i}$ is an integer, and thus by the multinomial theorem, it suffices to find a value of $n$ such that $k^N|\binom{n}{b_0,\cdots,b_d}$ for any nonnegative integers $b_0,\cdots,b_d$ with $\sum_{i=0}^{d} b_i=n$ and $b_d \geq n-N$ (note that $k^N$ is a fixed positive integer determined completely by $m$ and $N$). \newline \newline Let $k=\prod_{i=1}^{s} p_i^{e_i}$, its prime factorization. Since $b_0,\cdots,b_{d-1}$ are nonnegative integers with sum $\leq N$, we know that since $\binom{\sum_{i=0}^{d-1} b_i}{b_0,\cdots,b_{d-1}}$ is an integer, that $$v_p(\prod_{i=0}^{d-1} b_i!) \leq v_p((\sum_{i=0}^{d-1} b_i)!)<N$$ for each prime $p$. Now choose $n=M$ such that $v_p(M) \geq N+Nv_p(k)$ for each $p|k$. Note that $$v_p(\binom{n}{b_0,\cdots,b_d})=\sum_{i=0}^{\sum_{i=0}^{d-1} b_i} v_p(n-i)-\sum_{i=0}^{d-1} v_p(b_i!)>v_p(n)-N \geq Nv_p(k)$$ for each $p|k$, which immediately shows that $k^N|\binom{n}{b_0,\cdots,b_d}$ for all such multinomial coefficients, as desired. \end{proof}
With this, we can now prove one more lemma, which will be our ``patching" input:
\begin{lemma} Let $U \subset \mathbb{C}$ be an open bounded subset and $K=\mathbb{C} \backslash U$. Suppose that there exists some monic polynomial $m(X) \in \mathbb{R}[X]$ such that $\inf_{x \in K} |m(x)|>1$. Then there exists some monic $p(X) \in \mathbb{Z}[X]$ such that $\inf_{x \in K} |p(x)|>1$. \end{lemma}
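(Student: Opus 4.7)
My plan is to approximate the real monic polynomial $m$ by a rational monic polynomial $m'$ of the same degree, take a large power $m'^M$ whose top coefficients Lemma $4.1$ guarantees to be integers, round the remaining rational coefficients to the nearest integers to get a candidate $p \in \mathbb{Z}[X]$, and then verify that the resulting monic integer polynomial satisfies $\inf_K |p| > 1$. The key reason this has a chance of working is that $U$ is bounded, so $K$ contains the exterior of some disk $\{|z| \geq R_U\}$, and the ``interesting'' part of $K$ is the compact set $K \cap \overline{B(0, R_U+1)}$.

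First, since the coefficients of $m$ lie in $\mathbb{R}$ and $\inf_K |m| > 1$, I would note that for $|z|$ beyond a constant depending on $m$'s coefficients, $|m(z)|$ is automatically large; thus the infimum is effectively attained on a compact subset of $K$, on which evaluation is continuous in the coefficients. This lets me choose $m' \in \mathbb{Q}[X]$ monic of the same degree $d$ with coefficients so close to those of $m$ that $\inf_K |m'| \geq 1 + \eta$ for some $\eta > 0$. Then for a parameter $N$ to be chosen, Lemma $4.1$ produces arbitrarily large $M$ such that the coefficients of $X^{dM-i}$ in $m'(X)^M$ are integers for $1 \leq i \leq N$. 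I define $p(X) \in \mathbb{Z}[X]$ to be the monic degree-$dM$ polynomial that keeps these top $N+1$ coefficients and rounds each of the remaining $dM - N$ rational coefficients of $m'(X)^M$ to its nearest integer. The error $e(X) := p(X) - m'(X)^M$ then has degree at most $dM - N - 1$ with every coefficient in $[-\tfrac{1}{2}, \tfrac{1}{2}]$.

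The verification splits $K$ into a tail and a core. In the tail $|z| \geq R$ (for $R$ large depending on $M$ and the coefficient bounds), the monic leading term $z^{dM}$ of $p$ dominates, so $|p(z)| \geq |m'(z)^M|/2 \geq (1+\eta)^M/2 > 1$ since $|e(z)|$ grows only like $|z|^{dM-N-1}$ against $|m'^M(z)| \sim |z|^{dM}$. On the core $K \cap \overline{B(0, R)}$ I use $|p(z)| \geq (1+\eta)^M - |e(z)|$ with $|e(z)|$ bounded by $(dM/2)\,R^{dM - N - 1}$. The main obstacle is that naively this demands $N$ essentially linear in $M$, whereas Lemma $4.1$'s divisibility constraint only permits $N$ logarithmic in $M$. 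The way out is a change of variables $w = 1/(z - z_0)$ for a well-chosen integer $z_0 \in U$ (arranging this by first translating $m$ by an integer, which preserves integrality and monicity): after this inversion the transformed error has the form $\tilde e(w) = \sum_{i = N+1}^{dM} \epsilon_i w^i$ with $|\epsilon_i| \leq 1/2$, so $|\tilde e(w)| \leq \tfrac{1}{2} |w|^{N+1}/(1 - |w|)$ for $|w| < 1$, which is small on $1/(K - z_0)$ whenever $d_0 := \mathrm{dist}(z_0, K) > 1$. Choosing $z_0$ to maximize $d_0$ and invoking a Rouché-type comparison on the level curve $\{|m'| = 1 + \eta/2\}$ then shows that the lemniscate $\{|p| \leq 1\}$ lies inside $\{|m'| \leq 1 + \eta/2\} \subset U$, giving $|p| > 1$ on $K$. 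The hardest part of the argument is this balancing: ensuring that the translated inversion produces a $d_0$ large enough that the exponential lower bound $(1+\eta)^M$ beats the rounding error uniformly — this is where the Fekete–Szegő philosophy (that a lemniscate of transfinite diameter exceeding $1$ admits an approximating integer lemniscate) enters.
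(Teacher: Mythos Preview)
Your first reduction (replacing $m\in\mathbb{R}[X]$ by a nearby $m'\in\mathbb{Q}[X]$ and invoking Lemma~4.1 to make the top $N$ coefficients of $m'^M$ integral) is exactly how the paper begins. The gap is in the second half. You correctly diagnose that naive rounding of the remaining monomial coefficients fails: the error $e(z)=\sum_{j\le dM-N-1}\epsilon_j z^j$ has size $\asymp |z|^{dM-N}$ on the core, which overwhelms $(1+\eta)^M$ unless $N$ is essentially proportional to $M$, whereas Lemma~4.1 only allows $N=O(\log M)$. But your proposed repair via $w=1/(z-z_0)$ does not go through. First, after the substitution the transformed error $w^{dM}e(z_0+1/w)=\sum_{i=N+1}^{dM}\tilde\epsilon_i w^i$ does have only high $w$-powers, but the new coefficients $\tilde\epsilon_i$ are sums of the old $\epsilon_j$ against binomial coefficients times powers of $z_0$; they are \emph{not} bounded by $1/2$, so the bound $|\tilde e(w)|\le \tfrac12|w|^{N+1}/(1-|w|)$ is false. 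Second, the hypothesis gives no integer $z_0\in U$, and even over $\mathbb{R}$ there need not exist any $z_0\in U$ with $\mathrm{dist}(z_0,K)>1$: the lemniscate $\{|m|\le 1\}$ can have arbitrarily small inradius (take $m(z)=z^d-a$ with $a$ large). So the inversion step, as written, assumes structure on $U$ that the lemma does not provide.

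The paper's fix is different and avoids rounding monomials altogether. After securing the top $N=dk$ coefficients of $f^M$ as integers, it clears the coefficient of $x^{j}$ (for $j=dM-N,\,dM-N-1,\dots$) not by subtracting $\{a_j\}x^j$ but by subtracting $\{a_j\}\,f(x)^{q}x^{r}$, where $j=dq+r$ with $0\le r<d$. Each such correction is monic of the right degree, so the coefficients already made integral stay integral, and after finitely many steps one lands in $\mathbb{Z}[X]$. The crucial gain is that every correction is bounded by $|f(x)|^{q}|x|^{r}$ with $q\le M-k$; summing the geometric series in $|f(x)|$ gives a total error $\lesssim |f(x)|^{M-k+1}$, which is beaten by $|f(x)|^M\ge R^M$ as soon as $R^{k-1}$ exceeds an explicit constant. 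Since $k$ (hence $N=dk$) can be fixed \emph{before} choosing $M$, this is compatible with Lemma~4.1. In short: the missing idea is to do the ``rounding'' in the $f$-adic basis $\{f^q x^r\}$ rather than in the monomial basis $\{x^j\}$, so that the error is controlled by powers of $|f|$ (which is $\ge R>1$ on $K$) instead of powers of $|x|$.
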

\begin{proof} We begin by producing a polynomial $f(x) \in \mathbb{Q}[x]$ such that $\inf_{x \in K} |f(x)|>1$. Set $\epsilon=\min(\inf_{x \in K} |m(x)|-1,1)$, which is a positive real number since $\lim_{x \to \infty} m(x)=\infty$ and $\inf_{x \in C} |m(x)|>1$ on any compact subset $C$ of $K$. Write $m(x)=\sum_{i=0}^{d} a_ix^i$ so that $a_d=1$. Let $m=\sum_{i=0}^{d-1} |a_i|$. For some $r>m+2$, choose the closed disk $\overline{\mathbb{D}}_r$ containing $U$. By the density of $\mathbb{Q} \subset \mathbb{R}$, we can choose $b_i \in \mathbb{Q}, 0 \leq i \leq d-1$ such that $|a_i-b_i|<\frac{\epsilon}{2dr^{d-1}}$. Let $$f(x)=x^d+\sum_{i=0}^{d-1} b_ix^i$$ We will show that $f$ has the desired properties. First choose $x \in \overline{\mathbb{D}}_r \cap K$. Note that $$|f(x)-m(x)| \leq \sum_{i=0}^{d-1} |a_i-b_i||x|^i \leq r^{d-1}\sum_{i=0}^{d-1} |a_i-b_i|<\frac{\epsilon}{2}$$ which shows that $\inf_{x \in \overline{\mathbb{D}}_r \cap K} |f(x)|>1$. Now choose $x \not \in \overline{\mathbb{D}}_r$. Note that $$|f(x)| \geq |x|^d-(\sum_{i=0}^{d-1} |b_i||x|^i) \geq |x|^{d-1}(|x|-\sum_{i=0}^{d-1} |b_i|)$$ since $|x|>r>m+2>1$. But then $$\sum_{i=0}^{d-1} |b_i| \leq \sum_{i=0}^{d-1} |a_i|+|b_i-a_i| \leq m+\frac{\epsilon}{2r^{d-1}}<m+\frac{1}{2}$$ Thus since $r>1$, $$|f(x)| \geq |x|-m-\frac{1}{2} \geq \frac{3}{2}>1$$ Hence we conclude that $\inf_{x \in K} |f(x)|>1$. \newline \newline Square $f(x)$ if necessary so that $\text{deg}(f)>1$ and set $R=\inf_{x \in K} |f(x)|$ so that $R>1$. Choose some closed disk $\overline{\mathbb{D}}_r$ containing $U$, such that $|f(x)|>|x|$ outside $\overline{\mathbb{D}}_r$. Set $N=dk$ where $k$ is large enough so that $$R^{k-1}>r^d\frac{d}{R-1}+2, R^{k-d-1}>d\frac{1}{R-1}+2$$ By Lemma $4.1$, for fixed $N$, we can find arbitrarily large positive integers $M$ such that $f(x)^M$ has its $N$ highest coefficients integers, and so choose such an $M$, ensuring it is larger than $k$. Starting with the coefficient of $a_{Md-N}$ of $x^{Md-N}$, at step $i$, we write $Md-N+1-i=dq_i+r_i$ for some unique $q,r$ with $0 \leq r_i<d$, and then subtract $$\{a^{(i-1)}_{Md-N+1-i}\}f(x)^{q_i}x^{r_i}$$ where $a^{(i-1)}_{Md-N+1-i}$ is the coefficient of $x^{Md-N+1-i}$ after $i-1$ steps. Note that after step $i$, all coefficients for $x^{Md-N+1-i}$ and above are integers, and thus after $Md-N+1$ steps, the result will be a monic polynomial $p(X) \in \mathbb{Z}[X]$. We claim that $p$ has the desired properties. First consider $x \in \overline{\mathbb{D}}_r \backslash U$. Note that $$|p(x)| \geq |f(x)|^M-\sum_{i=1}^{Md-N+1} |f(x)|^{q_i}|x|^{r_i}>|f(x)|^M-r^d\sum_{i=1}^{Md-N+1} |f(x)|^{q_i}$$ Note that $q_i$ attains a given positive integer value exactly $d$ times, starting at $M-k$, and $0$ once, so we get $$\sum_{i=1}^{Md-N+1} |f(x)|^{q_i} \leq d\frac{|f(x)|^{M-k+1}-1}{|f(x)|-1}< d\frac{|f(x)|^{M-k+1}}{|f(x)|-1}$$ and thus $$|p(x)|>|f(x)|^{M-k+1}(|f(x)|^{k-1}-dr^d\frac{1}{|f(x)|-1}) \geq R^{k-1}-dr^d\frac{1}{R-1}>2$$ Thus $\inf_{x \in \overline{\mathbb{D}}_r \cap K} |p(x)|>1$. \newline \newline Now suppose that $x \not \in \overline{\mathbb{D}}_r$. Identical reasoning now yields $$|p(x)|>|f(x)|^{M-k+1}(|f(x)|^{k-1}-d|x|^d\frac{1}{|f(x)|-1})$$ By definition, we have that $|f(x)|>|x|$ and so $$|p(x)|>|f(x)|^{M-k+1+d}(|f(x)|^{k-d-1}-d\frac{1}{|f(x)|-1}) \geq R^{k-d-1}-d\frac{1}{R-1}>2$$ showing that $\inf_{x \in K} |p(x)|>1$, as desired. \end{proof}
We will now prove an important proposition which will quickly imply Theorem $1.1$.
 We will now proceed to the proof of Theorem $1.1$:
\begin{proof} Assuming Conjecture $1$, Lemma $4.3$ implies that we can find a monic polynomial $p(X) \in \mathbb{Z}[X]$ such that $$\inf_{x \in V} |p(\frac{1}{\phi(x)})|>1$$ which then immediately implies that there is an open neighborhood $U$ of $V$ such that $\inf_{x \in U} |p(\frac{1}{\phi(x)})|>1$. Now let $q(X)=p(1/X)$. We then have that $\inf_{x \in U} |q(\phi(x)|>1$. Then note that $q(\phi(z))=p(\frac{1}{\phi(z)})$, which is meromorphic on $U$ since $p$ is a polynomial and $\phi$ is holomorphic on $U$, and so $\frac{1}{q(\phi(z))}$ is also meromorphic on $U$. Furthermore, it follows immediately from the inequality $\inf_{x \in U} |q(\phi(x)|>1$ that $\frac{1}{q(\phi(z))}$ is bounded on $U$ and so any singularities are removable, which implies that $\frac{1}{q(\phi(z))}$ is in fact holomorphic on $U$. \newline \newline Thus for any bounded sequence $\{a_n, n \in \mathbb{N}\}$ of integers, $$\sum_{n=1}^{\infty} \frac{a_n}{q(\phi(z))^n}$$ is holomorphic on $U$ since it is a uniform limit of holomorphic functions on $U$. Furthermore, as power series, note that $\frac{1}{q(X)}=\frac{X^m}{X^mp(\frac{1}{X})}$, and since $p$ is monic, $X^mp(\frac{1}{X})$ has constant term $1$, which implies that the formal series $\frac{1}{q(X)^n} \in X^{mn}\mathbb{Z}[[x]]$. In particular, for any sum $\sum_{n=1}^{\infty} \frac{a_n}{q(X)^n}$, we get a convergent formal series $g \in \mathbb{Z}[[x]]$, and so these formal series provide our continuum of holomorphic power series on $U$ such that $g(\phi(x))=(g \circ f)(\varphi(x))$ is holomorphic on $U$ by composition.\end{proof}
\section*{Acknowledgments}
The author would like to thank his mentor Professor Vesselin Dimitrov for introducing him to this interesting
problem and offering suggestions. This project would not have been possible without his guidance.
The author also wishes to thank Shirley and Carl Larson for their generous support through an eponymous SURF fellowship.


\begin{thebibliography}{99} 

\bibitem{bogachev}
V. I. Bogachev, \textit{Measure Theory. Volume II}. Springer, 2007.

\bibitem{bostcharles}
J.-B. Bost and F. Charles, “Quasi-projective and formal-analytic arithmetic surfaces,”
arXiv:2206.14242, 2022. \url{https://arxiv.org/abs/2206.14242}.

\bibitem{dejong}
R. de Jong, \textit{Explicit Arakelov Geometry}. PhD thesis, 2004.

\bibitem{lang}
S. Lang, \textit{Introduction to Arakelov Theory}. Springer-Verlag, 1988.

\bibitem{marshall}
D. Marshall, “The Uniformization Theorem.” (online note).

\bibitem{rumely}
R. Rumely, “The Fekete–Szegő Theorem with Local Rationality Conditions on Algebraic Curves,”
arXiv:1203.1213, 2012. \url{https://arxiv.org/abs/1203.1213}.

\bibitem{saff}
E. B. Saff, “Logarithmic Potential Theory with Applications to Approximation Theory,”
arXiv:1010.3760, 2010. \url{https://arxiv.org/abs/1010.3760}.

\bibitem{tsuji}
M. Tsuji, \textit{Potential Theory in Modern Function Theory}. Maruzen, 1959.

\end{thebibliography}
\end{document}